\newtheorem*{theorem*}{Theorem}
\newtheorem*{lemma*}{Lemma}
\newtheorem{cor}{Corollary}
\newtheorem{theorem}{Theorem}
\newtheorem{lemma}{Lemma}
\theoremstyle{definition}
\theoremstyle{remark}
\newtheorem*{defstar}{Definition}
\numberwithin{equation}{section}
\newcommand{\Z}{\mathbb{Z}}
\newcommand{\ccirc}{\kern0.5ex\vcenter{\hbox{$\scriptstyle\circ$}}\kern0.5ex}
\def\det{\operatorname{det}}
\newcommand{\s}{\sigma}
\def\Z{\mathbb Z}
\def\Cc{\mathcal C}
\def\R{\mathbb R}
\def\D{\Delta}
\def\s{\sigma}
\def\z{\zeta}
\def\d{\delta}
\def\e{\epsilon}
\newcommand{\defeq}{\mathrel{\mathop:}=}
\begin{document}

\title[On the analytic theory of  isotropic ternary quadratic forms II]{On the analytic theory of isotropic ternary quadratic forms II}
\author{W. Duke}
\address{UCLA Mathematics Department,
Box 951555, Los Angeles, CA 90095-1555} \email{wdduke@ucla.edu}
 
\begin{abstract}
In  the first part of this work \cite{Du},  a quantitative supplement to the Hasse principle was given for the count of  the number of automorphic orbits of  primitive  zeros of a genus of ternary  quadratic forms. 
This sequel contains, for certain special forms,  an independent and elementary proof  of this result.  When combined with other results of \cite{Du}, this proof also leads to a refinement of an asymptotic result of \cite{Du} and  some corollaries for these special forms.
\end{abstract}
\maketitle

%

\section{Introduction}

One of the main results of  \cite{Du} is that the number of ($\Z$-automorphic) orbits of primitive integral zeros of a genus of  isotropic ternary quadratic forms equals the product, taken over primes $p$,  of the number 
of orbits of these zeros under $\Z_p$-automorphs.
In more detail, let $S$ be a symmetric $3\times 3$ matrix with integral entries and with $\det{S}=D>0$.
Associated to $S$ is the nonsingular ternary quadratic form
\begin{align}\label{dio1}
S(x)\defeq xSx^t=x\left( \begin{smallmatrix}
a&d&e\\
d&b&f\\e&f&c
\end{smallmatrix}\right)x^t \nonumber=a x_1^2+b x_2^2+c x_3^2+2 d x_1x_2+2e x_2x_3+2fx_1x_3,
\end{align}
where $x=(x_1,x_2,x_3)$ and $a,b,c,d,e,f\in \Z$.  Two forms $S,S'$ are in the same class  if there is an $A \in \mathrm{GL}_3(\Z)$ with 
\begin{equation}\label{cl}A^tSA=S'.\end{equation} They are in the same genus if for each prime $p$ (and $p=\infty$) there is an $A \in \mathrm{GL}_3(\Z_p)$ such that (\ref{cl}) holds. 
The genus $G$ of $S$ consists of finitely many, say h, classes \cite[Cor 1, p.139]{Cas}.
The form $S$ is isotropic if $S(x)=0$ for some primitive $x\in \Z^3.$
Let $\Cc(S)$ be the set of all primitive $x\in \Z^3$ with  $S(x)=0$
  and $O$ the group of integral automorphs of $S$, which is  given by
 \begin{equation}\label{auto}O=O(S)=\{A \in \mathrm{GL}_3(\Z);\;A^tSA=S\}.\end{equation}
Say   $x,x'\in \Cc(S)$  are in the same $\Z$-orbit if there is an $A \in O$ such that 
$
xA^t=x'.
$
Let $c(S)$ denote the number of $\Z$-orbits.  For a prime $p$ or $p=\infty$ 
denote by  $c_p(S)$  the number of $\Z_p$-orbits, which is defined similarly.  Here $c_p(S)\neq1$ for at most finitely many $p$. The following result, which was given as Theorem 2 of \cite{Du}, is a quantitative supplement to the Hasse principle for ternary quadratic forms.
 \begin{theorem*}\label{t1}
 For  $S(x)$ a nonsingular integral ternary quadratic form associated to a matrix $S$  as above we have 
\begin{equation}\label{cn2}
\sum_{S'\in G}c(S')=\prod_{p} c_p(S),
\end{equation}
where $G$ is the genus containing $S$. 
\end{theorem*}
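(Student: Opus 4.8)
The plan is to pass to the adelic picture and reduce the identity to a statement about the stabiliser of an isotropic vector. Set $V=(\mathbb Q^3,S)$, $L_0=\mathbb Z^3$, let $\mathbf G=O(V)$ be the orthogonal group as an algebraic group over $\mathbb Q$, and put $K_0=\prod_pO(L_{0,p})=\mathbf G(\widehat{\mathbb Z})$. Since $S$ is isotropic there is a primitive $v_0\in L_0$ with $S(v_0)=0$, and by Witt's theorem the nonzero isotropic vectors of $V$ form the single orbit $\mathbf G(\mathbb Q)v_0$; as $S$ is necessarily indefinite one also has $c_\infty(S)=1$, so only the finite places contribute. Writing $\mathcal C_p$ for the set of $\mathbb Z_p$-primitive isotropic vectors of $L_{0,p}$ and $\widehat{\mathcal C}=\prod_p\mathcal C_p\subset\widehat L_0$, the right-hand side $\prod_pc_p(S)$ equals $|K_0\backslash\widehat{\mathcal C}|$, which is finite since $c_p=1$ for all but finitely many $p$.

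The first step is to make the genus--adele dictionary explicit. The $h$ classes in $G$ are parametrised by $\mathbf G(\mathbb Q)\backslash\mathbf G(\mathbb A_f)/K_0$ via $g\mapsto L_g$, where $(L_g)_p=g_pL_{0,p}$; one has $O(L_g)(\mathbb Z)=\mathbf G(\mathbb Q)\cap gK_0g^{-1}$, and a vector $v\in\mathbf G(\mathbb Q)v_0$ lies in $\mathcal C(L_g)$ exactly when $g^{-1}v\in\widehat{\mathcal C}$. Writing out $\coprod_{S'\in G}O(S')(\mathbb Z)\backslash\mathcal C(S')$ accordingly, and using the $\mathbf G(\mathbb Q)$-action to normalise the marked zero to $v_0$, produces a bijection
\begin{equation*}
\coprod_{S'\in G}O(S')(\mathbb Z)\backslash\mathcal C(S')\;\cong\;K_0\backslash\{g\in\mathbf G(\mathbb A_f):gv_0\in\widehat{\mathcal C}\}/H(\mathbb Q),\qquad H=\mathrm{Stab}_{\mathbf G}(v_0).
\end{equation*}
The map $g\mapsto gv_0$ carries the right-hand side onto $K_0\backslash\widehat{\mathcal C}$ --- surjectivity follows from Witt's theorem over each $\mathbb Q_p$ together with the transitivity of $O(L_{0,p})$ on primitive isotropic vectors of a unimodular lattice at the unramified primes --- and its fibre over a class $[w]$ is a torsor under a class set $K_w\backslash H(\mathbb A_f)/H(\mathbb Q)$ for a suitable compact open $K_w\subset H(\mathbb A_f)$. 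Hence the theorem is equivalent to the triviality of all these class sets.

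It remains to analyse $H$. For an isotropic vector in a ternary space there is an exact sequence $1\to U\to H\to O(W)\to1$ with $U\cong\mathbb G_a$ the unipotent radical and $O(W)=\{\pm1\}$ the orthogonal group of the rank-one quadratic space $W=v_0^{\perp}/\langle v_0\rangle$. As $U$ is unipotent, $U(\mathbb Q)$ is dense in $U(\mathbb A_f)$, so $U$ contributes nothing to class sets and $K_w\backslash H(\mathbb A_f)/H(\mathbb Q)\cong O(W)(\mathbb Q)\backslash O(W)(\mathbb A_f)/\overline{K_w}$; this is trivial unless the image of $K_w$ in $O(W)(\mathbb A_f)=\prod_p\{\pm1\}$ is proper at two or more primes, i.e. unless the local stabiliser $O((L_g)_p)_v\to O(W)(\mathbb Z_p)$ fails to be surjective at $\ge2$ places. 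The step I expect to be the main obstacle is precisely the control of this set of ``bad'' primes: one must show that for every global orbit the local defect occurs at no more than one prime. The defect at $p$ is a $\pm1$-valued invariant read off from the Jordan splitting of $S$ over $\mathbb Z_p$ (it measures whether $v$ can be completed to an integrally split hyperbolic plane), and what is needed is a reciprocity statement forbidding more than one anomalous place --- for the special forms of the paper the Jordan splittings are explicit and this can be checked directly (the elementary core of the argument), whereas for arbitrary $S$ this input is in effect the Siegel--Weil identity used in \cite{Du}. As an alternative for the special forms one can instead parametrise the primitive zeros of the rational conic $S=0$ by $\mathbb P^1(\mathbb Q)$, identify the $O(S)(\mathbb Z)$-orbits with cusp-type orbits of an explicit arithmetic group, and obtain the product formula from the multiplicativity of the resulting count.
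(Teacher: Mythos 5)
Your adelic reformulation is essentially sound: the dictionary between classes in the genus and $\mathbf G(\mathbb Q)\backslash\mathbf G(\mathbb A_f)/K_0$, the bijection of the left-hand side with $K_0\backslash\{g: gv_0\in\widehat{\mathcal C}\}/H(\mathbb Q)$, the surjection onto $K_0\backslash\widehat{\mathcal C}=\prod_p c_p(S)$, and the identification of each fibre with a class set of $H\cong\mathbb G_a\rtimes O(W)$ are all correct, and they do reduce the theorem to the assertion that, for every adelic orbit of primitive zeros, the local stabilizer of $w_p$ in $O(L_p)$ fails to surject onto $O(W)(\mathbb Z_p)\cong\{\pm1\}$ at no more than one prime. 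But that assertion is the entire content of the theorem, and you do not prove it: you explicitly flag it as ``the main obstacle,'' and your two suggested remedies do not close it. Appealing to the Siegel--Weil input of \cite{Du} is circular for the statement being proved, and the claim that for special forms ``the Jordan splittings are explicit and this can be checked directly'' is not carried out; moreover it understates the difficulty, because the statement is not a prime-by-prime local fact. Each local defect $\epsilon_p$ can indeed be computed from $S$ over $\mathbb Z_p$ and the local orbit, but since local orbits at different primes combine freely into adelic orbits, what must be shown is that at most one prime admits \emph{any} defective local orbit --- a constraint tying different places together, which requires a global reciprocity-type argument (product formula/spinor-norm style) that your sketch never supplies. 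The closing sentence about parametrizing zeros by $\mathbb P^1(\mathbb Q)$ and cusp orbits is likewise only a pointer, not an argument.

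For comparison, the paper's own (elementary) proof for special $S$ with odd $D$ takes a different and fully explicit route that never analyzes stabilizers: completing a primitive zero to a unimodular matrix, it reduces $S$ to the normal form $S_3$ of Lemma \ref{l1}, in which every entry is forced by the genus invariants $\Omega,\Delta$ (via the integers $N_1,\dots,N_5$) except a single residue $\ell$ modulo $N$ with $\gcd(\ell,N)=1$, and this residue classifies the orbit. The Eisenstein--Smith genus characters then show that membership in the genus fixes exactly the values $\bigl(\tfrac{\ell}{p}\bigr)$ for $p\mid N$, so the orbits over the whole genus are counted by $2^{-\nu(N)}\phi(N)=\prod_{p\mid N}\tfrac{p-1}{2}$, and the same reduction over $\mathbb Z_p$ gives $c_p(S)=\tfrac{p-1}{2}$ for $p\mid N$ (and $1$ otherwise), yielding \eqref{cn2}. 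If you want to complete your approach for special forms, the work you would have to do is essentially equivalent to this reduction: an explicit determination of the local orbits and of the images of their stabilizers in $\{\pm1\}$ at the primes dividing $2D$, together with the global bookkeeping that the paper handles through the character theory of the genus.
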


Implicit here is the statement that each $c(S')$ is finite.
The proof of this theorem given in \cite{Du}  combines algebraic and analytic methods.
In this companion paper to \cite{Du},  I will restrict attention to   $S$ that satisfy certain conditions that are special but not overly restrictive. 
The goal is twofold. First I   give a proof of this result  for these $S$ that is  independent of  \cite{Du} and is more elementary and direct.
The idea is to use the extra conditions on $S$ to  extend the reduction process used in \S 5 of \cite{Du} and explicitly classify the orbits. 
 Second, 
for such $S$,  I will refine an asymptotic formula of  \cite{Du}  to show that the solutions are uniformly distributed across the orbits, which is  a property that does not hold for a  general $S$.   Also, I  deduce from this a simple explicit formula for the number of orbits. 
These new results follow easily once we have the elementary proof of  the theorem  and can use it in combination with statements from  \cite{Du}.
\section{New results for special forms}

Suppose that $S$ is primitive and isotropic  with $D=\det S>0$.
Let $-\Omega>0$ be the GCD of the $2\times 2$ minors of $S$.  It is classical, and will be recalled below, that for some $\D\in \Z^+$
\[
D=\det{S}=\Omega^2\D.
\]
  Note that the sign of $\Omega$ is chosen to be negative so that is conforms to the classical convention made in \cite{Smi} and \cite{Dic}.  
  The values $\Omega$ and $\D$ are genus invariants. 
  For convenience, in this paper I will use the following terminology.
  \begin{defstar}
  Say that $S$ is {\it special} if it is primitive, isotropic with $D>0$ and if
    \[N\defeq \gcd(\Omega,\D), \;\;\tfrac{\Omega}{N}\;\;\mathrm{and} \;\;\tfrac{\D}{N}\]  are  square-free.  \end{defstar}
    In particular, $S$ is special  if   $\Omega$ and $\D$  are square-free and this certainly holds if $D$ is square-free. 
   In the next section we will give an elementary and independent proof of the Theorem of the Introduction for special $S(x).$

    \begin{theorem}\label{t1}
 For  special $S(x)$  with odd $D$ we have
\begin{equation}\label{cn2}
\sum_{S'\in G}c(S')=\prod_{p} c_p(S),
\end{equation}
where $G$ is the genus containing $S$. 
\end{theorem}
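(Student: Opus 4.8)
The plan is to turn both sides of \eqref{cn2} into explicit arithmetic expressions and check that they coincide; the special hypotheses, together with $D$ odd, are exactly what is needed to push the reduction of \S5 of \cite{Du} through to a complete classification. Since $D$ is odd, every prime dividing $D$ is odd, so over each $\Z_p$ the form has a Jordan splitting into one-dimensional pieces and there is no $2$-adic ambiguity to manage.

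\textbf{Reduction of forms.} Using isotropy, pick a primitive $x_0$ with $S(x_0)=0$, extend $\Z x_0$ to a basis of $\Z^3$, and reduce the resulting matrix by the subgroup of $\mathrm{GL}_3(\Z)$ fixing the line $\Z x_0$. I would show this brings $S$ to a short normal form: a hyperbolic-type binary block (on $\Z x_0$ and one further vector) together with a unary block, the whole thing depending on $\Omega$, $\Delta$, and a residue-class parameter $\beta$. The requirement that $S$ be primitive and that $\Omega/N$, $\Delta/N$, $N$ be squarefree pins down exactly which $\beta$ occur and modulo what. This step produces the list of classes in $G$, hence an explicit value for $h=|G|$, and makes visible the classical duality $(\Omega,\Delta)\leftrightarrow(\Delta,\Omega)$ between $S$ and its reciprocal form, which organizes the bookkeeping.

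\textbf{Reduction of zeros.} For a normal-form representative the equation $S(x)=0$ becomes a factorization equation in which the product of two linear factors equals $\Delta$ times a perfect square; running the reduction of \S5 of \cite{Du} on a primitive solution, I would show that a reduced zero is governed by how the prime factors of $\Delta$ (respectively of $N$ and $\Omega/N$) are distributed over the two factors — essentially a divisor of $\Delta$ modulo squares, subject to a congruence compatibility with $\beta$ — once one quotients by $O(S)$, whose structure (automorphs of the binary block together with sign changes on the unary block) I would also make explicit. This yields a closed formula for $c(S')$ in terms of $\Omega$, $\Delta$ and $\beta$, and in particular shows each $c(S')$ is finite; summing over the classes found above gives the left-hand side of \eqref{cn2} in closed form.

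\textbf{Local counts and the match.} For $p\nmid D$ the form is $\Z_p$-unimodular and isotropic, so $c_p(S)=1$, while for $p\mid D$ (all odd) the same reduction carried out over $\Z_p$ — now with a single $\Z_p$-class and no class-number contribution — yields $c_p(S)$ as a small explicit number determined by whether $p$ divides $\Omega$, $\Delta$, or $N$ and by the one-dimensional Jordan shape, which the special condition confines to a handful of cases. I would then verify $\sum_{S'\in G}c(S')=\prod_p c_p(S)$ by direct comparison of the two explicit formulas, the multiplicativity of the product over $p$ reflecting the genus-theoretic splitting of the sum over $G$. The main obstacle is the completeness and the bookkeeping of the zero-reduction: showing that the reduced zero is genuinely canonical with no residual ambiguity — this is precisely where squarefreeness of $N$, $\Omega/N$, $\Delta/N$ is indispensable, since higher prime powers would create extra, genuinely inequivalent reduced zeros — and correctly accounting for the interaction between the divisor-of-$\Delta$ parametrization and the action of $O(S)$ so that every orbit is counted once; getting this to agree case-by-case with the local counts, including the $\beta$-dependence that must cancel in the product, is the concluding check.
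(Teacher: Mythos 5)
Your overall strategy (reduce $S$ along a primitive zero to a short normal form with a residue parameter, classify orbits of zeros by that parameter, redo the reduction over $\Z_p$ for the local counts, and compare) is the right skeleton, but two of your intermediate claims are not obtainable by this method, and one of them is where the whole proof lives. First, you assert that the form-reduction "produces the list of classes in $G$, hence an explicit value for $h=|G|$," and later a closed formula for each individual $c(S')$. The reduction gives neither. What it gives (this is Lemma 1 of the paper) is that each orbit of primitive zeros of a form determines a normal form $S_3$ with a parameter $\ell$, which for odd $D$ may be normalized to a unit residue class modulo $N=\gcd(\Omega,\Delta)$ — note the invariant is a residue class mod $N$, not "a divisor of $\Delta$ modulo squares." Distinct values of $\ell$ can perfectly well give properly equivalent forms, in which case they represent distinct orbits of zeros of the \emph{same} form; which values collapse in this way is not elementary (it is bound up with class numbers, cf.\ Meyer), so neither $h$ nor the individual $c(S')$ comes out of the reduction. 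Only the aggregate $\sum_{S'\in G}c(S')$ is accessible, and indeed the paper obtains $c(S)=h^{-1}\prod_{p\mid N}\tfrac{p-1}{2}$ only later (Corollary 2), by combining the elementary count with the analytic Theorem 3 of \cite{Du} — precisely because the elementary argument cannot distribute the orbits among the classes.

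Second, your plan has no mechanism for deciding which parameter values $\ell$ belong to the given genus, and this is the crux of matching the two sides of \eqref{cn2}. The paper does this via the Eisenstein--Smith character theory: using Smith's solvability criterion and his theorem that the character definition of genus agrees with the $\Z_p$-equivalence definition, one sees that for isotropic special $S$ with odd $D$ the genus is determined exactly by the symbols $\bigl(\tfrac{\ell}{p}\bigr)$ for $p\mid N$. Hence the admissible $\ell$ in one genus number $2^{-\nu(N)}\phi(N)=\prod_{p\mid N}\tfrac{p-1}{2}$, which is the left-hand side of \eqref{cn2}; the same reduction over $\Z_p$ gives $c_p(S)=\tfrac{p-1}{2}$ for $p\mid N$ and $c_p(S)=1$ otherwise, and the identity follows. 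Without this genus-character input (or a substitute for it), your proposed "direct comparison of two explicit formulas" cannot be carried out, since the formulas you want on the global side do not exist at the level of individual classes.
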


   In fact, the proof yields that for these forms 
   \begin{equation}\label{inf}
 \sum_{S'\in G}c(S')= \prod_{p|N}\tfrac{p-1}{2}.
   \end{equation}


The proof of the theorem  given in \cite{Du} makes use of an asymptotic formula that counts primitive points in an orbit. For special $S$ the constant in this asymptotic depends only on the genus $G$. 
  Let \[S^*=(\det{S})S^{-1}\] be the adjugate of $S$. Choose $y\in \R^3$ such that $S^*(y)=4D.$ It is easy to show that are at most finitely many $x\in \Cc(S)$ with
 $
 0<xy^t\leq T
 $
 for $T>0.$     
 For $x\in \Cc(S)$ its orbit is defined by
 \[\Cc(S,x)=\{x'\in \Cc(S); x'=xA^t\;\;\text{for some}\;\;A\in O(S)\},\]
 where $O(S)$ was defined in (\ref{auto}). 

\begin{theorem}\label{thm3}Let   $S$ be special and   fix $y\in \R^3$ with $S^*(y)=4D$.  Then for any  $x\in \Cc(S)$
we have
  \begin{equation}\label{mt}
  \#\{x'\in \Cc(S,x);\, 0<x'y^t \leq T\}\sim \kappa
 \, T,
  \end{equation}
  as $T\rightarrow \infty$, where $\kappa>0$ depends only on the genus $G$ of $S$.
    \end{theorem}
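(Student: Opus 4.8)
The plan is to deduce the asymptotic from two ingredients: the elementary classification of orbits that underlies Theorem \ref{t1} (which, by \eqref{inf}, shows that the total number of orbits across the genus $G$ is $\prod_{p\mid N}\tfrac{p-1}{2}$), and an asymptotic count of \emph{all} primitive zeros $x\in\Cc(S)$ with $0<xy^t\le T$. The first step is to recall from \cite{Du} (or re-derive in the special case) that the full count $\#\{x\in\Cc(S):0<xy^t\le T\}$, summed appropriately over the genus, is asymptotic to a constant times $T$, the constant being expressible through local densities and depending only on $G$. This is the global asymptotic formula referred to in the sentence ``The proof of the theorem given in \cite{Du} makes use of an asymptotic formula that counts primitive points in an orbit.'' So $\kappa$ will be exactly (total count constant)$/$(number of orbits), and the crux is to show this is the \emph{same} for every orbit $\Cc(S,x)$ — i.e. uniform distribution of solutions across orbits.

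First I would set up the counting geometrically: the condition $S^*(y)=4D$ normalizes $y$ so that the linear form $x\mapsto xy^t$ is, up to the automorphism group, a canonical ``height''; the region $0<xy^t\le T$ cut out on the cone $S(x)=0$ is, after parametrizing the rational points on the conic by a binary form, a region whose lattice-point count grows linearly in $T$. Concretely, since $S$ is isotropic one can parametrize $\Cc(S)$ (up to the action of $O(S)$ on the conic, which is essentially an infinite cyclic group times torsion coming from $N$) via a representation by an integral binary quadratic form of discriminant governed by $\D$; the linear functional $xy^t$ then becomes a definite binary quadratic form in the parameters, and counting lattice points in $\{q(u,v)\le T\}$ gives the $\sim c\,T$ with $c$ the reciprocal of the covolume — a quantity depending only on local data, hence only on $G$.

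The key step is to show the constant does not depend on the chosen orbit. Here I would use the explicit reduction/classification from Section 3 (the proof of Theorem \ref{t1}): the orbits are indexed by a set of size $\prod_{p\mid N}\tfrac{p-1}{2}$ coming from congruence conditions modulo the primes dividing $N$, and the reduction process exhibits, for each orbit, an explicit representative together with the stabilizer action. One then checks that within each orbit the points with $0<x'y^t\le T$ are counted by a shifted/translated copy of the \emph{same} definite binary quadratic form $q$, the shift depending on the orbit but not affecting the leading term. Summing the $\prod_{p\mid N}\tfrac{p-1}{2}$ orbit-counts must reproduce the global asymptotic, and since each orbit contributes the same leading coefficient, that coefficient is $\kappa=(\text{global constant})/\prod_{p\mid N}\tfrac{p-1}{2}$, visibly a function of $G$ alone.

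The main obstacle I anticipate is precisely the uniformity claim: a priori the orbit of $x$ could meet the region $0<x'y^t\le T$ in a sublattice of a different covolume, which would make $\kappa$ orbit-dependent (and indeed the remark ``a property that does not hold for a general $S$'' signals that this is where specialness is essential). Overcoming it requires the extra square-freeness hypotheses on $N$, $\Omega/N$, $\D/N$ to force the orbit-separating congruences to be ``balanced'' — each orbit is a full translate of one fixed lattice rather than a thinner sublattice — so I would spend the bulk of the argument tracking, prime by prime dividing $N$, exactly how the local orbit decomposition interacts with the linear form $xy^t$, invoking the oddness of $D$ to dispose of the prime $2$ cleanly.
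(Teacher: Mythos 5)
There is a genuine gap, and it sits exactly at the point you yourself flag as ``the key step.'' Your plan is to take the genus-level global count ($\sim C_G\,T$), divide by the total number of orbits $\prod_{p\mid N}\tfrac{p-1}{2}$ from \eqref{inf}, and justify the division by showing every orbit contributes the same leading coefficient. But that equal-contribution claim \emph{is} the theorem (it is equivalent to Corollary \ref{cor2}), it is false without the specialness hypothesis (in Example i) the three orbits carry densities $\tfrac15,\tfrac15,\tfrac35$), and your proposal never supplies the mechanism by which square-freeness of $N$, $\Omega/N$, $\D/N$ forces it: the assertion that each orbit is counted by ``a shifted copy of the same definite binary quadratic form, the shift not affecting the leading term'' is exactly what must be proved, and ``tracking prime by prime'' is a plan, not an argument. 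There is also a structural error in your setup: for an isotropic ternary form the integral automorphism group $O(S)$ is a nonuniform lattice in $\mathrm{O}(2,1)$ (infinite and non-elementary), not ``an infinite cyclic group times torsion,'' so the single-orbit count $\#\{x'\in\Cc(S,x):0<x'y^t\le T\}$ is a cusp-orbit count for a Fuchsian-type group rather than a lattice-point count for a rank-one group, and the covolume comparison you sketch does not literally make sense as stated. Finally, note a circularity risk: the per-class orbit number $c(S)$ (as opposed to the genus total) is only obtained in the paper as Corollary \ref{thm2}, \emph{after} Theorem \ref{thm3}, so it cannot be fed into a proof of Theorem \ref{thm3}.

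The paper's route avoids all of this by working orbit by orbit rather than dividing a global count. Theorem 4 of \cite{Du} and its Corollary 2 already give the asymptotic \eqref{mt} for a single orbit, with a constant built from local data: orbit densities $\delta_p(S,x)$ and the factors $\sigma_p$. The only thing to check is that these local quantities are genus invariants when $S$ is special, and this is precisely what part i) of Lemma \ref{l1} delivers: the reduction attached to any orbit of any form in $G$ produces the matrix \eqref{ss1} with $a,b$ (and, after the further reduction, $c=0$) expressed solely in terms of $N_1,\dots,N_5$, i.e.\ in terms of $\Omega$ and $\D$; the orbit-dependent parameter $\ell$ does not enter $\delta_p(S,x)$ (Lemmas 3 and 4 of \cite{Du}), and $\sigma_p$ visibly depends only on $G$. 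That is where specialness is actually used, and it replaces the uniformity argument you left open. If you want an argument independent of \cite{Du}, you would have to carry out in detail the binary-form parametrization and prove the equidistribution over the congruence classes mod $N$ that index the orbits -- a substantially longer task than the sketch suggests.
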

   
   The value of $\kappa$ can be given explicitly in terms of  $h$, the class number of $G$,  and the primes dividing $2D$.  In case $\Omega$ and $\D$ are square-free and odd, it is given by
   \begin{equation}\label{kappa}
  \kappa=\tfrac{3h}{2\pi\sqrt{D}} \prod_{p|D} \tfrac{2p}{p+1}\prod_{p|N}\tfrac{2}{p-1}.
   \end{equation}

Since for special $S$ the constant  $\kappa$ in (\ref{mt}) does not depend on $x$, the following consequence is immediate.
\begin{cor}\label{cor2}
For special $S$ the distribution of the primitive solutions is uniform across the orbits. Explicitly,
\[
\rho(x)\defeq\lim_{T\rightarrow \infty} \frac{\#\{x'\in \Cc(S,x);\, 0<x'y^t \leq T\}}{\#\{x'\in \Cc(S);\, 0<x'y^t \leq T\}}=\frac{1}{c(S)}.
\]
\end{cor}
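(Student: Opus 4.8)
The plan is to derive Corollary~\ref{cor2} directly from Theorem~\ref{thm3} together with the finiteness of $c(S)$. First I would observe that $\Cc(S)$ is partitioned into exactly $c(S)$ orbits $\Cc(S,x_1),\dots,\Cc(S,x_{c(S)})$, where $x_1,\dots,x_{c(S)}$ is any system of representatives for the action of $O(S)$ on $\Cc(S)$; the set $\Cc(S,x)$ depends only on the orbit of $x$, so this decomposition is well defined and its parts are pairwise disjoint. By (\ref{inf}) we have $c(S)\le \sum_{S'\in G}c(S')=\prod_{p|N}\tfrac{p-1}{2}<\infty$, so there are only finitely many parts; also, by the remark preceding Theorem~\ref{thm3}, each counting set $\{x'\in\Cc(S,x_j);\,0<x'y^t\le T\}$ is finite for every $T>0$.

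Next, counting primitive points with $0<x'y^t\le T$ respects this partition, so
\[
\#\{x'\in\Cc(S);\,0<x'y^t\le T\}=\sum_{j=1}^{c(S)}\#\{x'\in\Cc(S,x_j);\,0<x'y^t\le T\}.
\]
Applying Theorem~\ref{thm3} to each of the finitely many representatives $x_j$, every summand on the right is asymptotic to $\kappa T$ with the \emph{same} constant $\kappa$, since $\kappa$ depends only on the genus $G$ and not on the chosen point. Summing these finitely many asymptotic relations yields
\[
\#\{x'\in\Cc(S);\,0<x'y^t\le T\}\sim c(S)\,\kappa\,T,\qquad T\to\infty.
\]

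Finally, for the given $x\in\Cc(S)$, Theorem~\ref{thm3} also gives $\#\{x'\in\Cc(S,x);\,0<x'y^t\le T\}\sim\kappa T$. Dividing the two asymptotics and letting $T\to\infty$, and using $\kappa>0$, produces $\rho(x)=\kappa/(c(S)\kappa)=1/c(S)$, which is the assertion of the corollary. There is no real obstacle here: the one point requiring care is the finiteness of $c(S)$, which guarantees that the partition has finitely many parts so that the sum of asymptotics is legitimate, and this is furnished by (\ref{inf}); the uniformity of the limit over orbits is automatic precisely because the constant $\kappa$ in Theorem~\ref{thm3} is independent of $x$.
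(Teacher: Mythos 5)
Your argument is correct and is exactly the reasoning the paper leaves implicit when it calls the corollary ``immediate'': partition $\Cc(S)$ into its finitely many orbits, apply Theorem \ref{thm3} with the same $x$-independent constant $\kappa$ to each, sum the asymptotics, and divide. Nothing further is needed.
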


This corollary is not  true for general $S$. 

\subsubsection*{Example i)}
 It  will be shown in \S \ref{se4}  that the orbits of
\begin{equation}\label{S}
S(x)=-3^3x_1^2+x_2^2-x_3^2,
\end{equation}
which is not special, 
   are represented by  $x,x',x''\in \Cc(S)$    
  where \begin{equation}\label{rho}
\rho(x)=\rho(x')=\tfrac{1}{5}\;\;\mathrm{and}\;\;\rho(x'')=\tfrac{3}{5}.
\end{equation}

\smallskip
 Theorem \ref{thm3}, together  with (\ref{inf}) and the fact that the constant in the asymptotic of the full count given in Theorem 3 of \cite{Du} is a genus invariant,  imply  the following simple formula for the number of orbits of $S$. 
\begin{cor}\label{thm2}  Let $S$  be special  with $D=\det S$ odd.
   Then
\begin{equation*}\label{cn3}
c(S)=h^{-1}\prod_{p|N}\tfrac{p-1}{2},
\end{equation*}
where $h$ is the class number of the genus of $S$ and $N=\gcd(\Omega,\D)$.
\end{cor}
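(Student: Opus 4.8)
The plan is to pin down $c(S)$ by comparing two asymptotic counts for the number of primitive zeros $x'$ with $0<x'y^t\le T$, for one fixed $y\in\R^3$ with $S^*(y)=4D$. For a class $S'$ in the genus $G$ write $n(S',T)=\#\{x'\in\Cc(S');\,0<x'y^t\le T\}$. Since primitivity, isotropy, and the quantities $\Omega$ and $\D$ are all genus invariants, every $S'\in G$ is again special with $\det S'$ odd; hence Theorem \ref{thm3} applies to each such $S'$. Moreover, by Theorem \ref{t1} each $c(S')$ is one of finitely many summands of a finite product, so it is finite, and $\Cc(S')$ decomposes into finitely many orbits $\Cc(S',x'_1),\dots,\Cc(S',x'_{c(S')})$.

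First I would sum the orbitwise asymptotic (\ref{mt}) over these finitely many orbits. Because the number of orbits is finite, the asymptotics add; and since the constant $\kappa$ in (\ref{mt}) depends only on $G$ — not on the chosen orbit, nor on which class $S'\in G$ we take — this yields $n(S',T)\sim c(S')\,\kappa\,T$ as $T\to\infty$, for every $S'\in G$.

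Next I would invoke Theorem 3 of \cite{Du}, which gives $n(S',T)\sim \mu\,T$ for a constant $\mu>0$ that is a genus invariant, i.e.\ the same for all $S'\in G$. Comparing the two asymptotics for $n(S',T)$ forces $c(S')\,\kappa=\mu$ for every $S'\in G$; since $\kappa>0$ this shows $c(S')$ is independent of the class $S'\in G$, say $c(S')=c$ for all $S'\in G$. (This is Corollary \ref{cor2} read across the genus rather than across the orbits of a single form.) Finally, using that $G$ consists of $h$ classes, $\sum_{S'\in G}c(S')=hc$, and combining this with the identity (\ref{inf}), namely $\sum_{S'\in G}c(S')=\prod_{p\mid N}\tfrac{p-1}{2}$, I conclude $c(S)=c=h^{-1}\prod_{p\mid N}\tfrac{p-1}{2}$.

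The only point requiring care — and the step I expect to be the main obstacle — is the passage from the orbitwise asymptotics to the asymptotic for the total count $n(S',T)$: one must know a priori that the number of orbits is finite, so that a finite sum of $\sim\kappa T$ terms is again $\sim(\text{const})\,T$, which is exactly what Theorem \ref{t1} (equivalently (\ref{inf})) provides; and one must ensure that the same normalization $S^*(y)=4D$ underlies both Theorem \ref{thm3} and the full-count asymptotic of \cite{Du}, so that the constants $\kappa$ and $\mu$ are genuinely comparable. Once those are in hand the rest is bookkeeping with genus invariants.
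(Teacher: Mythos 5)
Your argument is correct and is essentially the paper's own: the corollary is deduced by summing the orbitwise asymptotic of Theorem \ref{thm3} (with its genus-invariant constant $\kappa$), comparing with the genus-invariant full-count asymptotic of Theorem 3 of \cite{Du} to get that $c(S')$ is the same for all $h$ classes, and then dividing the identity (\ref{inf}) by $h$. Your added care about finiteness of the number of orbits and the common normalization $S^*(y)=4D$ is exactly the right bookkeeping and matches what the paper implicitly relies on.
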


This result leads to  different justifications of  Examples i) and ii) given  in \cite{Du} after Theorem \ref{t1} and makes it easy to give more. 

\subsubsection*{Example ii)}
Consider the Legendre equation 
\begin{equation*}\label{exx}
S(x)=q^2x_1^2-qx_2^2-x_3^2=0,
\end{equation*}
where $q$  is a product of $\nu$ distinct primes,  each  $\equiv 1\,(\mathrm{mod}\,8)$ and each a quadratic residue of every other. It follows from \cite[p.188]{Mey} that $h=2^{\nu}$. This $S$ is special with $N=q$.  Thus by Corollary \ref{thm2}
we have that
\begin{equation*}\label{mm1}
c(S')=\prod_{p|q} \tfrac{p-1}{4}
\end{equation*}
for any $S'$ in the genus of $S$.

\section{Elementary proof of Theorem \ref{t1} }
  The proof relies on ideas from  papers of Eisenstein \cite{Eis}, Smith \cite{Smi2} and especially A. Meyer \cite{Mey}. Part of it, specifically i) of Lemma \ref{l1},  will also be used in the proof of Theorem \ref{thm3}.  In addition, (\ref{inf}) comes out automatically. 
The proof can be simplified if we  assume  that $\Omega$ and $\D$ are square-free, but for the statement of Theorem \ref{thm3} and its corollaries it is desirable to make as few assumptions as is possible.

%

Recall that $D=\det\,S>0$. The  primitive adjugate of $S$  is \[S^{\dag}\defeq \Omega^{-1}S^*,\]which has discriminant $\Omega^{-3}D^2$. 
 The GCD of the $2\times 2$ minors of $S^*$ is $D$ so
 the GCD of  the entries of the adjugate of $S^\dag$  is the integer $\D=D\Omega^{-2}$. Therefore  $D=\Omega^2\D$.
 
We are assuming  that \[N=\gcd(\Omega,\D),\;\tfrac{\Omega}{N},\;\tfrac{\D}{N}\;\;\text{are square-free}.\]
Define $N_5=\gcd(N,\tfrac{\Omega}{N})$,   $N_4=\gcd(N,\tfrac{\D}{N})$
and positive integers $N_1,N_2,N_3$ through
\begin{equation}\label{NN}
N=N_3N_4N_5, \;\;\;\; \Omega=-NN_5N_2,\;\;\;\;\D=NN_4N_1.
\end{equation}
Then $N_1,N_2,N_3,N_4,N_5$ are square-free and relatively prime in pairs.
Also,
\begin{equation}\label{N2}
D=N_1N_2^2N_3^3N_4^4N_5^5,
\end{equation}
which explains the choice of subscripts of the $N_j's.$
Note that from (\ref{NN}) we have
\begin{equation}\label{NNN}\Omega=-N_5^2N_4N_3N_2\;\;\;\mathrm{and}\;\;\D=N_4^2N_5N_3N_1.\end{equation}
In case $\Omega$ and $\D$ are square-free we have that $N_4=N_5=1$ and $N=N_3$ so
\[
\Omega=-NN_2\;\;\;\;\mathrm{and}\;\;\; \D=NN_1.
\]

\begin{lemma} \label{l1}
Suppose that $S$ is special with $D=\Omega^2\D$. 

i) To each orbit of primitive solutions $x$ of $S(x)=0$ corresponds $\ell\in \Z$ such that  $S$  is properly equivalent to
\begin{equation}\label{S3}
S_3=\left(\begin{smallmatrix}
   0& 0&N_3N_5N_4^2N_2 \\
     0&-N_3N_5^3N_1&0\\
 N_3N_5N_4^2N_2&0&N_4N_2\ell
\end{smallmatrix}\right).
\end{equation}
Here  $S_3$ is  uniquely determined  up to the value of $\ell$.

ii) If $D$ is odd  we may assume that $0\leq\ell< N$ with $\gcd(\ell,N)=1$ and then $\ell$ is uniquely determined.


iii) The primitive adjugate of $S_3$ is given by  \begin{equation}\label{Sp2}
S_3^\dag=\left(\begin{smallmatrix}
   -\ell N_5N_1& 0&N_3N_5^2N_4N_1 \\
     0&-N_3N_4^3N_2& 0\\
N_3N_5^2N_4N_1& 0& 0
\end{smallmatrix}\right).
\end{equation}

\end{lemma}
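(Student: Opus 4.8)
The plan is to exploit the isotropy of $S$ to put it in a normal form adapted to the factorization $D = N_1 N_2^2 N_3^3 N_4^4 N_5^5$, following the reduction process of \S 5 of \cite{Du} (originally due to Eisenstein, Smith and Meyer). Given a primitive zero $x_0 \in \Cc(S)$, one can complete $x_0$ to a $\Z$-basis of $\Z^3$, i.e. choose $A_0 \in \mathrm{SL}_3(\Z)$ whose first column is $x_0$; then $A_0^t S A_0$ has vanishing $(1,1)$-entry. A further unimodular change of variables fixing the first basis vector (an operation of the shape $\smat{1}{*}{0}{*}$ acting on the remaining two coordinates, plus shears adding multiples of the first row/column) can be used to clear the $(1,2)$-entry modulo the relevant divisors and to bring the $2\times 2$ block in the last two coordinates into reduced form. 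The key arithmetic input is that, because $S$ is isotropic and the GCDs $\Omega$ and $\D$ have the special square-free structure recorded in (\ref{NN})–(\ref{NNN}), the off-diagonal entry in the first row that survives must be exactly $\pm N_3 N_5 N_4^2 N_2$ up to sign and unit, and the $(2,2)$-entry must be $\pm N_3 N_5^3 N_1$: these are forced by computing $\Omega(S)$ (the GCD of the $2\times 2$ minors) and $D = \det S$ for the reduced matrix and matching against (\ref{N2}), using that a form in the same class has the same $\Omega$, $\D$ and $D$.

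For part i), once the shape (\ref{S3}) is reached, the only remaining freedom is in the $(3,3)$-entry; writing it as $N_4 N_2 \ell$ is legitimate because $\det S_3 = -N_3^2 N_5^4 N_4^4 N_2^2 N_1 \cdot N_3 N_5 \cdot \ell \cdot (\text{appropriate factor})$ must equal $D = N_1 N_2^2 N_3^3 N_4^4 N_5^5$, which pins down the coefficient of $\ell$ and shows $\ell \in \Z$ (one checks the $(3,3)$-entry is divisible by $N_4 N_2$ from the minor conditions — the GCD of the $2\times 2$ minors of $S_3$ must be $-\Omega = N_5^2 N_4 N_3 N_2$, and expanding the minors involving the third row and column forces the divisibility). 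Uniqueness of $S_3$ up to $\ell$ follows since any two reduced forms of the stated shape differ by a unimodular map preserving the isotropic line $(1,0,0)$, and such maps act on $\ell$ only by shifts and sign; I would record precisely which shifts occur and see that they are multiples of $N$. For part ii), when $D$ is odd one has more room: the residual shears change $\ell$ by integer multiples of $N$ (this is where oddness is used, to avoid a factor of $2$ ambiguity coming from the $(1,2)$-clearing step), and a primitivity/nonsingularity argument shows $\gcd(\ell, N) = 1$ — indeed if a prime $p \mid N$ divided $\ell$ then $p^2$ would divide an extra $2\times 2$ minor, contradicting squarefreeness of $\Omega/N$, $\D/N$ or $N$ itself. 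Part iii) is then a direct computation: form the adjugate $S_3^* = (\det S_3) S_3^{-1}$ by the classical cofactor formula for a $3\times 3$ matrix of the shape (\ref{S3}) with two zero entries on the diagonal, and divide by $\Omega = -N_5^2 N_4 N_3 N_2$; the anti-diagonal structure makes this elementary, and the entries come out as stated in (\ref{Sp2}), with the $(1,1)$-entry $-\ell N_5 N_1$ carrying the parameter $\ell$.

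The main obstacle I anticipate is part ii): controlling exactly the lattice of admissible $\ell$ — proving that the normalizer of the isotropic line acts on $\ell$ by translations in $N\Z$ and sign, no more and no less, and that this forces $\gcd(\ell, N) = 1$. This requires carefully tracking how each allowed unimodular operation (shears by the isotropic vector, and the $\mathrm{GL}_2(\Z)$-action on the complementary block) propagates through the entries of (\ref{S3}), and it is the step where the oddness of $D$ genuinely enters, since at $p = 2$ the clearing of the $(1,2)$-entry would only succeed modulo $2N$ rather than $N$, doubling the ambiguity. Parts i) and iii) are, by contrast, largely bookkeeping: i) is the reduction algorithm plus matching of genus invariants, and iii) is a single cofactor computation. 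Throughout, the identity $D = \Omega^2\D$ and the relations (\ref{NN})–(\ref{NNN}) are the algebraic backbone that makes every GCD and determinant match up.
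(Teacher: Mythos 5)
Your overall strategy is the same as the paper's: complete the primitive zero to a unimodular basis, reduce $S$ to the shape with a zero corner, pin down the surviving entries from the invariants, normalize the $(3,3)$-entry by shears, and compute the primitive adjugate. However, there is a genuine gap at the central step of part i). You assert that the entries $a=N_3N_5N_4^2N_2$ and $b=N_3N_5^3N_1$ are ``forced by computing $\Omega$ and $D$ for the reduced matrix and matching against (\ref{N2}).'' That is not enough. For the reduced matrix with rows $(0,0,a)$, $(0,-b,c)$, $(a,c,d)$ one has $\det=a^2b$ and $-\Omega=\gcd(bd+c^2,\,ac,\,ab,\,a^2)$, and these two conditions alone do not determine $a$ and $b$: for example, with $N_1=N_2=N_3=N_4=1$ and $N_5=p$ (so $D=p^5$, $\Omega=-p^2$), the choice $a=p^2$, $b=p$, $c=0$, $d=p$ has the correct determinant and the correct GCD of $2\times 2$ minors, and is excluded only because every entry is divisible by $p$, i.e.\ by primitivity. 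The paper's proof uses exactly this extra input twice: after $\Omega\mid a^2$ gives $a=N_5N_4N_3N_2a'$ and $a'^2b=N_1N_3N_4^2N_5^3$, one shows $N_5^3\mid b$ by a divisibility chain through $c$ and $d$ ($p\,\|\,b$ with $p\mid N_5$ would force $p\mid c$, $p\mid d$, $p\mid a$, contradicting primitivity of $S$), and then that the residual square factor $b'$ of $b$ equals $1$ because otherwise $b'$ would divide every entry of $S_1^\dag$, contradicting primitivity of the adjugate. Your sketch contains neither argument, so the identification of the shape (\ref{S3}) is only justified in the easy case where $\Omega$ and $\D$ are square-free (i.e.\ $N_4=N_5=1$), not for general special $S$.

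Two smaller corrections. In part ii) the oddness of $D$ does not enter through any ``$(1,2)$-clearing modulo $2N$''; it is used to make integral the explicit shear $T_3$ (whose $(1,3)$-entry is $\tfrac12(1+N_5^2N_4^2N_2N_1)$) which replaces $\ell$ by $\ell+kN$, and uniqueness of $\ell$ modulo $N$ then follows because the transformations fixing the rest of $S_3$ only shift $\ell$ by multiples of $N$. Also, $\gcd(\ell,N)=1$ is not a matter of ``an extra $2\times 2$ minor'' versus squarefreeness: the clean argument is that a prime $p\mid N_3N_5$ dividing $\ell$ would divide every entry of $S_3$, contradicting primitivity of $S_3$, while a prime $p\mid N_3N_4$ dividing $\ell$ would divide every entry of $S_3^\dag$ in (\ref{Sp2}), contradicting primitivity of $S_3^\dag$; together these cover all $p\mid N=N_3N_4N_5$. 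Part iii) is indeed the routine cofactor computation you describe.
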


\begin{proof}

Choose any primitive $x\in \Z^3$ with $S(x)=0$.
  Completing $x^t$  to $M_1\in \mathrm{SL}(3,\Z)$ 
we have
\begin{equation*}\label{s1}
S[M_1]=\left(\begin{smallmatrix}
   0& s_1&s_2\\
     s_1&*& *\\
 s_2& *& *
\end{smallmatrix}\right).
\end{equation*}
Suppose $a=\gcd(s_1,s_2).$ Choose $u,v\in \Z$ with $us_1+vs_2=a.$
Define
\[
M_2=\left(\begin{smallmatrix}
   1& 0&0\\
     0&\tfrac{s_2}{a}& u\\
 0& -\tfrac{s_1}{a}& v
\end{smallmatrix}\right)\in  \mathrm{SL}(3,\Z)
\]
so that $M_1M_2$ still has $x^t$ as its first column and 
 \begin{equation}\label{ss1}S_1=S[M_1M_2]=\left(\begin{smallmatrix}
   0& 0&a \\
     0&-b& c\\
 a& c& d
\end{smallmatrix}\right).\end{equation}
This shows that we may find an equivalent $S_1$ of the form (\ref{ss1}) whose associated  orbit 
of primitive zeros obviously contains  $(1,0,0)$. 
It easily follows that $a,b>0$ are uniquely determined,  and that $c$ is determined modulo $\gcd(a,b)$, {\it  by the orbit of the solution. }

We will show that our conditions on $S$ imply that $a$ and $b$ are determined by $\Omega$ and $\Delta$ and can be written in terms of the $N's.$
Explicitly, we will show that
\begin{equation}\label{ad7} a=N_3N_5N_4^2N_2\;\;\mathrm{and}\;\;b=N_3N_5^3N_1.\end{equation}
Recall that  $-\Omega$ is the gcd of the entries of $S^*,$ hence of 
\begin{equation}\label{ad}
S_1^*=\left(\begin{smallmatrix}
-bd-c^2 & ac& ab\\
ac& -a^2 & 0\\
 ab & 0 & 0\\
\end{smallmatrix}\right).
\end{equation}
In particular, $\Omega|a^2$. It follows from (\ref{NNN}) that the square part of $\Omega$ is $N_5^2$, so
\begin{equation}\label{ad3} a=N_5N_4N_3N_2a'\end{equation} for some $a'\in \Z.$ From (\ref{N2})
\begin{equation}\label{ad2}
a'^2b=N_1N_3N_4^2N_5^3.
\end{equation}
Hence $(N_3N_5N_1)|b.$ Next we show that $N_5^3|b.$ Otherwise there would be a prime $p|N_5$ and $p|b$ such that $p^2\nmid b$. But from the divisibility of each entry of $S_1^*$ in (\ref{ad}) by $\Omega$
in particular that
$
p^2|(-bd-c^2),
$
we would have that $p|d$ and $p|c$.  Also, $N_5|a$ so $p|a$, contradicting the primitivity of $S$.
Therefore by (\ref{ad2}) we have 
\begin{equation}\label{ad5}
b=N_3N_5^3N_1b'^2
\end{equation}
for some $b'\in \Z$ and also  $a'b'=N_4$. We claim that $b'=1$ and so $a'=N_4.$
For this, note that $b'$ is square-free and 
\[
N_4|(-bd-c^2)
\]
so $b'|c$. 
Now $b'|N_4$ and so $b'^2\nmid \Omega$ and from (\ref{ad3}) we have that $b'|a.$
It follows that $b'$ divides all coefficients of $S_1^\dag$,  a contradiction. 
Thus (\ref{ad7}) follows from (\ref{ad3}) and (\ref{ad5}).

Furthermore, from $\Omega |(-bd-c^2), (N_3N_5)|\Omega$ and $(N_3N_5)|b$ we have for $c$  given in   (\ref{ss1}):  
\begin{equation}\label{ad8}
c=N_3N_5c'.
\end{equation}
for some $c'\in \Z.$
Next we want to transform $S_1$ in (\ref{ss1})  so that $c$ becomes zero.
For $t_1\in \Z$ and
\[
T_1=\left(\begin{smallmatrix}
   1& 0&0\\
     0&1 & t_1\\
 0& 0& 1
\end{smallmatrix}\right)
\;\;\text{we have}\;\;
S_2=S_1[T_1]=\left(\begin{smallmatrix}
   0& 0&a \\
     0&-b& c-bt_1\\
 a& c-bt_1&d'
\end{smallmatrix}\right),\]
where \begin{equation*}
d'=-bt_1^2+2ct_1+d.
\end{equation*}
From (\ref{ad7}) and (\ref{ad8}) we obtain
\[
c-bt_1=N_3N_5(c'-N_5^2N_1t_1).
\]
Choose $t_1$ so that $N_4^2N_2| (c'-N_5^2N_1t_1)$. Then for some $a''\in \Z$ we have
$
c-bt_1=a'' a.
$
Now for
\[
T_2=\left(\begin{smallmatrix}
   1& -a''&0\\
     0&1 & 0\\
 0& 0& 1
\end{smallmatrix}\right)\;\;\text{it holds that}\;\;S_3=S_2[T_2]=\left(\begin{smallmatrix}
   0& 0&a \\
     0&-b&0\\
 a&0&d'
\end{smallmatrix}\right).
\]
A calculation shows that \[S_3^*=\left(\begin{smallmatrix}
   -bd'&0&ab \\
     0&-a^2&0\\
 ab&0&0
\end{smallmatrix}\right)\]
so $\Omega|(bd')$ or $(N_5^2N_4N_3N_2)|(N_3N_5^3N_1d')$, hence  $d'=N_4N_2\ell$ for some $\ell\in \Z$. Expressed in terms of the $N_j's$ we have (\ref{S3}). 
This gives the first statement of the lemma.

\smallskip
Suppose now that $D$ is odd. 
Now for
\[
T_3=\left(\begin{smallmatrix}
   1&N_5^2N_1&\frac{1}{2}(1+N_5^2N_4^2N_2N_1)\\
     0&1 &N_4^2N_2\\
 0& 0& 1
\end{smallmatrix}\right),
\]
which is integral since $D$ is odd, and $k\in \Z$ we have
\[
S_3[T_3^k]=\left(\begin{smallmatrix}
   0& 0&N_3N_5N_4^2N_2 \\
     0&-N_3N_5^3N_1&0\\
 N_3N_5N_4^2N_2&0&N_4N_2(\ell+kN)
\end{smallmatrix}\right).\]
Thus we can find a $k$  to reduce $S_3$ to the form  (\ref{S3}) where $0\leq \ell<N$. 
Now $S_3^\dag$ in (\ref{Sp2}) is easily computed. Using the primitivity of both $S_3$ and $S_3^\dag$ we see that 
 $\gcd(\ell,N)=1.$
The value of $\ell$ is uniquely determined by $S_3$ in (\ref{S3}) since the residue class of $\ell$ modulo $N$ is preserved under all transformations of $S_3$ that fix all but its $(3,3)$-entry. 

\end{proof}


In order to prove (\ref{cn2}),  it is convenient to relate the definition of genus for ternary quadratic forms given above with that introduced by Eisenstein  and further developed by Smith. Their definition is in terms of characters and was modelled on that of Gauss  for binary quadratic forms. For ternary forms we must consider simultaneously the form $S$ and its primitive adjugate $S^\dag.$ The definition  is based on the identities
\begin{align*}
S(x)S(y)-\tfrac{1}{4}\big(x_1 \partial_{s_1}& S(y)+x_2 \partial_{y_2}S(y) +x_3 \partial_{y_3}S(y)\big)^2\\&=\Omega S^\dag(x_2y_3-x_3y_2,x_3y_2-x_1y_3,x_1y_2-x_2y_1)\\
S^\dag(x)S^\dag(y)-\tfrac{1}{4}\big(x_1 \partial_{y_1}& S^\dag(y)+x_2 \partial_{y_2}S^\dag(y) +x_3 \partial_{y_3}S^\dag(y)\big)^2\\&=\D S(x_2y_3-x_3y_2,x_3y_2-x_1y_3,x_1y_2-x_2y_1),
\end{align*}
where $x=(x_1,x_2,x_3)$ and $y=(y_1,y_2,y_3).$
For odd $D$, let $p,q$ be primes with $p|\Omega$ and $q|\D$.   Using these identities we see that for  $m$ with $p\nmid m$ represented by $S$,  the value $\big(\frac{m}{p}\big)$ is independent of the choice of $m$,  as is 
$\big(\frac{n}{q}\big)$ when $n$ is represented by $S^\dag$ and $q\nmid n$.
In the Eisenstein/Smith definition, the genus of $S$ consists of all $S'$ with invariants $\Omega,\D$ and these character values for each $p,q$ and where, when $p=q$, we have two character values. 

In \cite{Smi2} Smith  showed that, by this definition, $S$ and $S'$ are in the same genus if and only if there is a rational transformation $A$, whose entries have denominators prime to $2\Omega \D$, 
with $S'=S[A].$ 
As a consequence, the Eisenstein/Smith definition of genus coincides with the usual one. For a proof see e.g. Theorem 50 on p.78. of \cite{Wat} (see also the note on \S 5 on p. 138).

Smith  \cite{Smi} also gave explicit conditions for a ternary form to represent zero nontrivially,  which generalize those in  Legendre's theorem.   
Under our assumptions, $S$ represents zero nontrivially  if and only if
\[
\big(\tfrac{S(x)}{p}\big)=\big(\tfrac{N_3N_5N_1}{p}\big),\;\big(\tfrac{-S^\dag(y)}{q}\big)=\big(\tfrac{N_3N_4N_2}{q}\big)  \;\;\;\mathrm{and}\;\;\;\big(\tfrac{S(z)}{r}\big)\big(\tfrac{-S^\dag(z)}{r}\big)=\big(\tfrac{-N_5N_4N_2N_1}{r} \big)
\]
for all primes $p,q,r$ with $p|N_4N_2$ and $x$ with $p\nmid S(x)$,  $q|N_5N_1$ and $y$ with $q\nmid S^\dag(y)$ and $r|N_3$ and $z$  with $r\nmid S(z)S^\dag(z).$

It follows that the genus characters of  $S$ are determined by the values of
\[
\big(\tfrac{S(x)}{p}\big)\;\;\;\mathrm{and}\;\;\;\big(\tfrac{-S^\dag(y)}{q}\big),\]
for primes $p|N_5N_3$, $q|N_4N_3$ and any $x $ with $\gcd\big(S(x),N_3N_5\big)=1$ and $y$ for which $\gcd\big(S^\dag(y),N_3N_4\big)=1.$
Referring to Lemma \ref{l1} and the $(3,3)$-entry of $S_3$, together with  the $(1,1)$-entry of $S_3^\dag$, we see that the genus of $S$ is completely determined by the values of 
\[
\big(\tfrac{N_2N_4\ell}{p}\big)\;\;\;\mathrm{and}\;\;\big(\tfrac{N_1N_5\ell}{q}\big)\]
or, equivalently,  by the values of 
$
\big(\tfrac{\ell}{p}\big)$, for $p| N.$

When another $S'$ with corresponding $\ell'\neq \ell$  is properly equivalent to $S$,
  then $\ell$ and $\ell'$  belong to two different orbits of solutions  of the same form.  All forms in the genus of $S$ will be represented.
 The total number of orbits of solutions from forms  of the genus of $S$  is thus 
\[
2^{-\nu(N)} \phi(N)=\prod_{p|N} \tfrac{p-1}{2},
\]
where $\nu$ and  $\phi$ are the usual arithmetic functions.
To finish the proof of Theorem \ref{t1},    repeat the above reduction argument over $\Z_p$ for each $p|N$
to show that \[c_p(S)=\tfrac{p-1}{2}.\]
\qed

\section{Proof of Theorem \ref{thm3} }\label{se4}

 Theorem \ref{thm3}    is an easy consequence of results of \cite{Du}  when combined with the argument above. For special $S$ the values $a,b,c$ of (\ref{ss1}) are completely determined by the genus $G$, as follows from i) of  Lemma \ref{l1}.  Then by Lemmas 3 and 4 of \cite{Du}, we see that  the value $\d_p(S,x)$ is also determined by $G$. It is clear that the value $\s_p$ given in  (3.1) of \cite{Du} depends only on $G$.
Thus Theorem \ref{thm3} follows from 
 Theorem 4  of \cite{Du} and its Corollary 2.
 \qed

The formula (\ref{kappa}) can be derived by using  computations of $\d_p$ from    \cite{Pal}\footnote{ In \cite{Pal} note the corrections: $(n_s-1)$ in (23)  should be $(n_s+1)$ and in the formula for $z_i$ below (47), the number 8 should be 3 and 4 should be 2. These corrections were given in \cite{Wat2}. } and of $\d_p(S,x)$ from \S 3 of \cite{Du}.

\subsubsection*{Justification of Example i)}

It is known (see e.g. \cite[Thm 47]{Dic}) that $h=1$ for $S$ from (\ref{S}). 
 It can be checked that inequivalent  orbits are represented by 
\[
S_1=\left(\begin{smallmatrix}
   0& 0&3\\
     0&-3 & 1\\
 3& 1& 0
\end{smallmatrix}\right),\;\; S_1'=\left(\begin{smallmatrix}
   0& 0&3\\
     0&-3 & 2\\
 3& 2& 0
\end{smallmatrix}\right)\;\;\mathrm {and}\;\;\;S_1''=\left(\begin{smallmatrix}
   0& 0&1\\
     0&-3^3 & 1\\
 1& 1& 1
\end{smallmatrix}\right).
\]
From  Lemmas 3 and 4 of \cite{Du}  we have  that $\prod_p\d_p(S_1,x_0)=\prod_p\d_p(S_1',x_0)=2\cdot3^4$ while $\prod_p\d_p(S_1'',x_0)=2\cdot3^3$, where $x_0=(1,0,0).$
Using   \cite{Pal} and \cite{Jo} in the verification that $\d_2=2$,  we have that
\[
\prod_p\d_p =\tfrac{8}{3\z(2)}\tfrac{3^4}{4}.
\]
Also, it can be shown easily that
\[
\prod_p\s_p=\tfrac{8}{3\z(2)}\tfrac{5}{4}.
\]
Now an application of Theorems 2, 3 and 4 of \cite{Du}  verifies (\ref{rho}) and also that all orbits have been represented.

\bibliographystyle{amsplain}
\bibliographystyle{amsplain}

\end{document}